\def\NoBlackBoxes{\overfullrule0pt}
\theoremstyle{plain}
\newtheorem{theorem}{Theorem}
\theoremstyle{definition}
\theoremstyle{main}
\let\savedef=\endproof
\def\endproof{~$\square$\savedef}
\def\bad{\spaceskip=0.33emplus0.6emminus0.15em\immediate\write5{\string\bad}}
\let\leq\leqslant
\def\({\left(}
\def\){\right)}
\def\[{\left[}
\def\]{\right]}
\def\<{\left\langle}
\def\>{\right\rangle}
\def\GL{\operatorname{GL}}
\def\mdeg{\operatorname{deg}}
\def\const{\mathrm{const}}
\def\mdeg{\operatorname{deg}}
\def\NN{\mathbb N}
\def\CC{\mathbb C}
\def\nn{\mathbf n}
\def\dd{\mathrm{dd}}
\let\leq\leqslant\let\geq\geqslant
\let\ge\geqslant
\let\myt\widetilde
\def\bad{\spaceskip=0.33emplus0.6emminus0.15em\immediate\write5{\string\bad}}
\def\GL{\mathrm{GL}}
\def\nn{\mathbf n}
\def\dd{\mathbf d}
\def\uu{\mathbf u}
\def\vv{\mathbf v}
\def\ff{\mathbf f}
\def\No{\rm №}
\begin{document}

\selectlanguage{english}

\title{On Some Algebraic Properties of Hermite--Pad\'e Polynomials}
\author[Sergey~P.~Suetin]{Sergey~P.~Suetin}
\address{Steklov Mathematical Institute of the Russian Academy of Sciences, Russia}
\email{suetin@mi-ras.ru}


\maketitle

\markright{Algebraic Properties of Hermite--Pad\'e Polynomials}

\begin{abstract}
Let  $[f_0,\dots,f_m]$ be a tuple of
series in nonnegative powers of $1/z$, $f_j(\infty)\neq0$. It is supposed that the tuple is in ``general position''.  We give a construction of type I and type II Hermite--Pad\'e polynomials to the given tuple of degrees $\leq{n}$ and $\leq{mn}$ respectively and the corresponding $(m+1)$-multi-indexes with the following property. Let $M_1(z)$ and $M_2(z)$ be two $(m+1)\times(m+1)$ polynomial matrices, $M_1(z),M_2(z)\in\GL(m+1,\CC[z])$, generated by type I and type II Hermite--Pad\'e polynomials respectively. Then we have $M_1(z)M_2(z)\equiv I_{m+1}$, where $I_{m+1}$ is the identity $(m+1)\times(m+1)$-matrix.

The result is motivated by some novel applications of Hermite--Pad\'e polynomials  to the investigation of monodromy properties of Fuchsian systems of differential equations; see~\cite{Yam09}, \cite{Man12},
~\cite{NoTsYa13}, \cite{MaTs17},~\cite{Nag21}.

Bibliography:~\cite{Yam09}~titles.

Keywords: Hermite--Pad\'e polynomials, monodromy problem.
\end{abstract}

\vskip1cm



\subsection*{1. Introduction}\label{s1s1}

The very well-known and very important algebraic relations between type I and type II Hermite--Pad\'e (HP) polynomials were discovered by K.~Mahler (see~\cite{Mah68}). These relations connect to each other type I and type II HP polynomials associated with a tuple of formal power series given at zero point $\zeta=0$. Thus from those relations it follows that type I and type II HP polynomials are not independent. In the last decade based on  G.~Chudnovsky ideas~\cite{Chu80},~\cite{Chu80b} some generalizations of Mahler's identities were obtained and successfully applied to the monodromy problems of Fuchsian systems of differential equations; see~\cite{Yam09},~\cite{Man12}, \cite{NoTsYa13}, \cite{MaTs17},~\cite{NaYa18},~\cite{Nag21} and the bibliography therein.
Similar to Mahler's results, these generalizations were produced for power series given at zero point $\zeta=0$.  But after then while applying the  identities to monodromy problems it was made the changing of the variable from $\zeta$ to $z=1/\zeta$; see~\cite{Yam09} and~\cite{MaTs17}. The reason is that it is namely in such a form the identities are applicable to monodromy problems.

The main purpose of the current paper is to propose a construction of type I and type II HP polynomials associated with a tuple $[f_0,\dots,f_m]$ of formal power series given not at zero point but at the infinity point $z=\infty$. Under some assumption on the ``general position'' of the tuple (cf.~\cite{NiSo88},~\cite{IkSu21},~\cite{StRy21}) this construction  provides two $(m+1)\times(m+1)$ polynomial matrices $M_1(z)$ and $M_2(z)$, $M_1(z),M_2(z)\in\GL(m+1,\CC[z])$, with the following property: $M_1(z)M_2(z)\equiv I_{m+1}$, where $I_{m+1}$ is the identity $(m+1)\times(m+1)$-matrix.

\subsection*{2. The case of $m=1$}\label{s1s2}
To make our idea of the construction of HP polynomials with the prescribed properties, we start with the case when $m=1$, i.e. the case of Pad\'e polynomials. Thus we are given a tuple $[f_0,f_1]$ of two formal power series at infinity point. We assume that the tuple $[f_0,f_1]$ is in a ``general position''. For the case $m=1$ that means that all the indexes of the Pad\'e table for the series $f=f_1/f_0$ are normal (see~\cite{NiSo88}). Evidently it is also the case for the Pad\'e table for the reciprocal series $1/f$.

Let $\nn_0:=(n,n-1)$ and $\nn_1:=(n-1,n)$ be two multi-indexes, $n\geq1$.

It is easy to see that for multi-index $\nn_0$ there exist two polynomials $Q^{(0)}_0$, $\mdeg{Q^{(0)}_0}\leq n$, and $Q^{(0)}_1$, $\mdeg{Q^{(0)}_1}\leq{n-1}$ with the following property\footnote{Here and in what follows we consider relations similar to~\eqref{1} only as formal relations in the space of formal power series.}:
\begin{equation}
Q^{(0)}_0 f_0+zQ^{(0)}_1f_1=O\(\frac1{z^n}\),\quad z\to\infty.
\label{1}
\end{equation}
From relation~\eqref{1} it directly follows that $Q^{(0)}_0(0)\neq0$. Indeed, for otherwise we have that $Q^{(0)}_0=z\myt{Q}^{(0)}_0$, where $\mdeg{\myt{Q}^{(0)}_0}\leq{n-1}$. Therefore from~\eqref{1} it would follow that
\begin{equation}
\myt{Q}^{(0)}_0 +Q^{(0)}_1f=O\(\frac1{z^{n+1}}\),\quad z\to\infty.
\label{2}
\end{equation}
The relation~\eqref{2} implies that the index $(n-1,n-1)$ is not normal for the series $f=f_1/f_0$ which is in contradiction to our assumption. Thus we have that $\mdeg{Q^{(0)}_1}=n-1$, $\mdeg{Q^{(0)}_0}=n$ and $Q^{(0)}_0(0)\neq0$. Therefore we can normalize $Q^{(0)}_0$ as $Q^{(0)}_0(0)=1$

Just in the similar way we obtain that for multi-index $\nn_1$ there exist two polynomials $Q^{(1)}_0$, $\mdeg{Q^{(1)}_0}\leq{n-1}$, and $Q^{(1)}_1$, $\mdeg{Q^{(1)}_1}\leq{n}$, with the property
\begin{equation}
zQ^{(1)}_0f_0+Q^{(1)}_1f_1=\(\frac1{z^n}\),\quad z\to\infty,
\label{3}
\end{equation}
Based on normality of Pad\'e table for the series $1/f$ it follows from~\eqref{3} that $\mdeg{Q^{(1)}_0}=n-1$, $\mdeg{Q^{(1)}_1}=n$, $Q^{(1)}_1(0)\neq0$ and thus we can introduce the normalization $Q^{(1)}_1(0)=1$.

From~\eqref{1} and~\eqref{3} it easy follows the relation
\begin{equation}
Q^{(0)}_0Q^{(1)}-z^2Q^{(0)}_1Q^{(1)}_0=O(1),
\quad z\to\infty.
\label{4}
\end{equation}
Therefore the polynomial
$\mathcal P:=Q^{(0)}_0Q^{(1)}-z^2Q^{(0)}_1Q^{(1)}_0$ is constant. To find this constant we evaluate this polynomial at zero point to obtain that
$\mathcal P(0)=1$. Thus we  obtain that $Q^{(0)}_0Q^{(1)}-z^2Q^{(0)}_1Q^{(1)}_0\equiv1$. Therefore for the $2\times2$-matrix
\begin{equation}
M(z):=
\begin{pmatrix}
Q^{(0)}_0 & zQ^{(0)}_1\\
zQ^{(1)}_0 & Q^{(1)}_1
\end{pmatrix}
\notag
\end{equation}
we have that $\operatorname{det}M(z)\equiv1$ and
\begin{equation}
M^{-1}(z)=
\begin{pmatrix}
Q^{(1)}_1 & -zQ^{(0)}_1\\
-zQ^{(1)}_0 & Q^{(0)}_0
\end{pmatrix},
\notag
\end{equation}
where $\mdeg{Q^{(0)}_0}=\mdeg{Q^{(1)}_1}=n$ (cf.~\cite{Yam09}).

\subsection*{3. The case of general $m$}\label{s1s3}
Let now consider the tuple $[f_0,\dots,f_m]$ of $m\ge2$ series in nonnegative powers of $1/z$, $f_j(\infty)\neq0$, and such that the tuple is in a ``general position''. Here ``general position'' means that all multi-indexes $\nn=(n_0,\dots,n_m)\in\NN^{m+1}$ are normal for the HP polynomials associated with the tuple at the infinity point (see~\cite{NiSo88},~\cite{StRy21}).

Set $n\in\NN$, $\nn_k:=(n-1,\dots,n-1,\underbrace{n}_{k+1},n-1,\dots,n-1)
\in\NN^{m+1}$, $k=0,\dots,m$, be  a multi-index. It is easy to see that for each $k=0,\dots,m$ and the corresponding $\nn_k$ there exist polynomials $Q^{(k)}_j$, $j=0,\dots,m$, $\mdeg{Q^{(k)}_j}\leq{n-1}$, $j\neq k$, $\mdeg{Q^{(k)}_k}\leq{n}$, with the following property
\begin{equation}
zQ^{(k)}_0f_0+\dots+zQ^{(k)}_{k-1}f_{k-1}+
Q^{(k)}_kf_k+zQ^{(k)}_{k+1}+\dots+zQ^{(k)}_mf_m
=O\(\frac1{z^{mn}}\).
\label{5}
\end{equation}
The relation~\eqref{5} implies that $\mdeg{Q^{(k)}_k}=k$ and $Q^{(k)}_k(0)\neq0$ because otherwise we would obtain that multi-index $(n-1,\dots,n-1)\in\NN^{m+1}$ is not normal for the given tuple $[f_0,\dots,f_m]$. Therefore we can normalize $Q^{(k)}_k(0)=1$,
$k=0,\dots,m$.

For each $n\in\NN$ let $\dd_s:=(mn-1,\dots,mn-1,\underbrace{mn}_{s+1},mn-1,\dots,mn-1)\in\NN^{m+1}$ be a multi-index, $s=0,\dots,m$. It is easy to see that for each $s=0,\dots,m$ and the corresponding $\dd_s$ there exist polynomials $P^{(s)}_j$, $j=0,\dots,m$, $\mdeg{P^{(s)}_j}\leq{mn-1}$, $j\neq s$, $\mdeg{P^{(s)}_s}\leq{mn}$, with the following property
\begin{equation}
zf_sP^{(s)}_j-f_jP^{(s)}_s=O\(\frac1{z^n}\),\quad j=0,\dots,m,
\quad j\neq s.
\label{6}
\end{equation}
The relations~\eqref{6} imply that $\mdeg{P^{(s)}_s}=mn$ and $P^{(s)}_s(0)\neq0$ because otherwise we would obtain that the multi-index $(mn-1,\dots,mn-1)$ is not normal for the given tuple $[f_0,\dots,f_m]$. Thus we can normalize $P^{(s)}_s(0)=1$.

The following result is valid.

\begin{theorem}\label{the1}
Let
\begin{align}
M_1(z):&=\bigl(zQ^{(k)}_0,\dots,zQ^{(k)}_{k-1},Q^{(k)}_k,zQ^{(k)}_{k+1},\dots,zQ^{(k)}_m\bigr)_{k=0,\dots,m},\notag\\
M_2(z):&=\bigl(zP^{(s)}_0,\dots,zP^{(s)}_{s-1},P^{(s)}_s,zP^{(s)}_{s+1},\dots,zP^{(s)}_m\bigr)_{s=0,\dots,m}\notag
\end{align}
be two polynomial $(m+1)\times(m+1)$-matrices. Then $M_1(z)M_2(z)\equiv I_{m+1}$, where $I_{m+1}$ is the identity $(m+1)\times(m+1)$-matrix.
\end{theorem}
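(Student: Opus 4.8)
The strategy mirrors the $m=1$ computation of Section~2, now carried out for all $(m+1)^2$ entries of the product simultaneously. Write $M_1(z)M_2(z)=\bigl(c_{ks}(z)\bigr)_{k,s=0,\dots,m}$. Each entry $c_{ks}$ is a polynomial in $z$; the plan is to show (i) $c_{ks}(z)=O(1)$ as $z\to\infty$, hence $c_{ks}$ is constant, and (ii) that constant equals $\delta_{ks}$ by evaluating at $z=0$. Step (ii) is immediate once (i) is known: at $z=0$ every factor carrying an explicit $z$ vanishes, so the only surviving contribution to $c_{ks}(0)$ comes from pairing the $k$-th ``big'' polynomial $Q^{(k)}_k$ of $M_1$ with the $s$-th ``big'' polynomial $P^{(s)}_s$ of $M_2$, and this pairing contributes only on the diagonal $k=s$, where it gives $Q^{(k)}_k(0)P^{(k)}_k(0)=1\cdot1=1$ by the normalizations established above. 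So the whole force of the argument is in the degree/growth estimate (i).

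To prove (i), fix $k$ and $s$ and expand $c_{ks}(z)=\sum_{j=0}^m \bigl(M_1\bigr)_{kj}(z)\,\bigl(M_2\bigr)_{js}(z)$. I would split the sum according to whether $j=k$, $j=s$, or $j\notin\{k,s\}$, and in each case insert the asymptotic relations~\eqref{5} and~\eqref{6}. The clean way is to introduce the ``row vector'' $R_k:=\bigl((M_1)_{k0},\dots,(M_1)_{km}\bigr)$ and the ``column vector'' $C_s:=\bigl((M_2)_{0s},\dots,(M_2)_{ms}\bigr)^{\!\top}$, and to use the two families of relations in the form: $R_k\cdot(f_0,\dots,f_m)^{\top}=O(z^{-mn})$ from~\eqref{5}, and from~\eqref{6} that the vector $(M_2)_{\cdot s}$ is, up to $O(z^{-n})$ weighting against the $f_j$'s, proportional to $f_s^{-1}$ times $(f_0,\dots,f_m)$. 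Concretely, multiply $c_{ks}$ by $f_s$: then $f_s\,c_{ks}=\sum_j (M_1)_{kj}\bigl(f_s (M_2)_{js}\bigr)$, and~\eqref{6} says $f_s(M_2)_{js}=f_j (M_2)_{ss}+O(z^{-n})$ for $j\neq s$ while the $j=s$ term is $f_s(M_2)_{ss}$ itself; hence $f_s\,c_{ks}=(M_2)_{ss}\sum_j (M_1)_{kj}f_j+O(z^{-n})\cdot(\text{polynomial})$. The first sum is exactly the left-hand side of~\eqref{5}, namely $O(z^{-mn})$, so $f_s c_{ks}=(M_2)_{ss}\cdot O(z^{-mn})+O(z^{-n})\cdot O(z^{mn})$. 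Here one must track degrees carefully: $(M_2)_{ss}=P^{(s)}_s$ has degree $mn$, and the stray polynomial factors $(M_1)_{kj}$ have degree $\le n$, so the error terms are $O(z^{mn-mn})=O(1)$ and $O(z^{n+mn-n})=O(z^{mn})$ — the latter is too big, and this is where care is needed.

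\textbf{Main obstacle.} The delicate point is precisely the bookkeeping of polynomial degrees versus orders of vanishing at infinity in the error terms, so that the naive estimate $O(z^{mn})$ is actually improved to $O(1)$. The right fix is to avoid multiplying through by $f_s$ and instead argue symmetrically: from~\eqref{6} one also gets relations expressing $z f_s P^{(s)}_j - f_j P^{(s)}_s = O(z^{-n})$ which, combined pairwise, force the \emph{polynomial} combination $c_{ks}(z)$ itself — before clearing denominators — to be $O(1)$, because the $f_j(\infty)\neq0$ hypothesis lets one cancel the series $f_0$ (say) and reduce to a single scalar identity of Mahler type, exactly as~\eqref{4} was derived from~\eqref{1} and~\eqref{3} in the case $m=1$. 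I would therefore organize the proof as: (a) reduce to the ratios $f_j/f_0$ and recognize $M_1$, $M_2$ as built from type~I and type~II HP data for these ratios; (b) invoke the generalized Mahler/Chudnovsky duality (or re-derive it directly from~\eqref{5}–\eqref{6} by the same ``multiply, subtract, read off the order'' trick used for~\eqref{4}) to conclude $c_{ks}=O(1)$; (c) evaluate at $z=0$ as above. The only real work is step~(b), and within it the degree count ensuring the product matrix has polynomial — indeed constant — entries rather than merely rational ones.
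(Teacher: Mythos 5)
Your first computation is in fact the paper's proof, and it works; the ``main obstacle'' you then describe is a phantom created by a miscount, and the alternative route you retreat to is never carried out. Concretely: after writing $f_s c_{ks}=\sum_j (M_1)_{kj}\bigl(f_s(M_2)_{js}\bigr)$ and substituting $f_s(M_2)_{js}=f_j(M_2)_{ss}+O(z^{-n})$ from~\eqref{6}, the error contribution is $\sum_{j\neq s}(M_1)_{kj}\cdot O(z^{-n})$. The factor $(M_2)_{ss}=P^{(s)}_s$ of degree $mn$ does \emph{not} multiply these error terms — it multiplies only the main term $\sum_j(M_1)_{kj}f_j=O(z^{-mn})$ from~\eqref{5}, which it brings up to exactly $O(1)$. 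The error terms carry only the factors $(M_1)_{kj}$, each of degree $\le n$ (either $Q^{(k)}_k$ of degree $n$ or $zQ^{(k)}_j$ with $\mdeg Q^{(k)}_j\le n-1$), so they are $O(z^{n})\cdot O(z^{-n})=O(1)$, not $O(z^{n+mn-n})$. Hence $f_s c_{ks}=O(1)$, and since $f_s(\infty)\neq0$ you may divide by $f_s$ to get $c_{ks}=O(1)$; this is precisely how the paper argues, and your evaluation at $z=0$ then finishes the proof correctly.

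The genuine gap in what you submitted is therefore organizational but real: you disavow the computation that actually closes the argument and replace it with step (b) — ``invoke the generalized Mahler/Chudnovsky duality'' or ``reduce to the ratios $f_j/f_0$'' — which you yourself identify as ``the only real work'' and do not execute. A reduction to ratios and a Mahler-type duality at infinity would need its own degree bookkeeping (the same bookkeeping you just mishandled), so nothing is gained by the detour. Redo the degree count in your first paragraph, delete the ``Main obstacle'' paragraph, and the proof is complete and coincides with the paper's.
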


\begin{proof}
Set
\begin{align}
\uu_k(z):&=\bigl(zQ^{(k)}_0,\dots,zQ^{(k)}_{k-1},Q^{(k)}_k,zQ^{(k)}_{k+1},\dots,zQ^{(k)}_m\bigr),\quad  k=0,\dots,m,\notag\\
\text{and}&\notag\\
\vv_s(z):&=\bigl(zP^{(s)}_0,\dots,zP^{(s)}_{s-1},P^{(s)}_s,zP^{(s)}_{s+1},\dots,zP^{(s)}_m\bigr),\quad s=0,\dots,m,\notag
\end{align}
be two polynomial vectors. Then the scalar product $\mathcal P_{k,s}(z):=\uu_k(z)\cdot \vv_s^{\mathrm T}(z)$ is a polynomial is $z$. Set $\mathbf f:=(f_0,\dots,f_m)$. From~\eqref{5} and~\eqref{6} it follows that
\begin{align}
\mathcal P_{k,s}(z)&=\frac1{f_s}\uu_k(z)\cdot f_s\vv_s^{\mathrm T}
=
\frac1{f_s}\uu_k(z)\cdot \(P^{(s)}_s\ff^{\mathrm T}+O\(\frac1{z^n}\)\)
\notag\\
&=\frac1{f_s}\bigl(P^{(s)}_s\uu_k(z)\cdot\ff^{\mathrm T}+O(1)\bigr)
=\frac1{f_s}\biggl(P^{(s)}_s\cdot O\(\frac1{z^{mn}}\)+O(1)\biggr)=O(1).
\label{7}
\end{align}
From~\eqref{7} we obtain that $\mathcal P_{k,s}(z)=O(1)$ as $z\to\infty$.
Thus polynomial $\mathcal P_{k,s}(z)\equiv\const=\mathcal P_{k,s}(0)$.

We have to consider two different cases.

1) If $k=s$ then
$$
\mathcal P_{k,k}(z)=\uu_k(z)\cdot\vv_k^{\mathrm T}(z)
=Q^{(k)}_kP^{(k)}_k+zp(z).
$$
Thus $\mathcal P_{k,k}(0)=Q^{(k)}_k(0)P^{(k)}_k(0)=1$.

2) If $k\neq s$ then $\mathcal P_{k,s}(z)=zq(z)$ and thus $\mathcal P_{k,s}(0)=0$.

Theorem~\ref{the1} is proved.
\end{proof}

\def\by#1;{#1\unskip,}
\def\paper#1;{``#1\unskip''\unskip,}
\def\paperinfo#1;{#1\unskip.}
\def\eprint#1;{``#1\unskip''\unskip,}
\def\eprintinfo#1;{#1\unskip,}
\def\book#1;{``#1\unskip''\unskip,}
\def\inbook#1;{``#1\unskip''\unskip,}
\def\bookinfo#1;{#1\unskip,}
\def\jour#1;{#1\unskip,}
\def\issue#1;{\No~#1\unskip,}
\def\yr#1;{#1\unskip,}
\def\pages#1.{#1\unskip.}
\def\vol#1;{\textbf{#1}\unskip,}
\def\finalinfo#1;{#1\unskip.}
\def\publ#1;{#1\unskip,}
\def\publadrr#1;{#1\unskip,}
\def\publaddr#1;{#1\unskip,}
\def\procinfo#1;{#1\unskip,}
\def\serial#1;{#1\unskip,}
\def\ed#1;{ed #1\unskip,}
\def\eds#1;{ed #1\unskip,}

\end{document}